\providecommand{\U}[1]{\protect\rule{.1in}{.1in}}
\newtheorem{theorem}{Theorem}
\newtheorem{corollary}[theorem]{Corollary}
\newtheorem{lemma}[theorem]{Lemma}
\newtheorem{proposition}[theorem]{Proposition}
\begin{document}
\title{Concentration of Symmetric Eigenfunctions}
\author{Daniel Azagra}
\address{Departamento de An\'{a}lisis Matem\'{a}tico\\
Universidad Complutense de Madrid\\
Fac. de CC. Matem\'{a}ticas, Avda. Complutense s/n\\
28040 Madrid, Spain}
\author{Fabricio Maci\`{a}}
\address{DEBIN\\
Universidad Polit\'{e}cnica de Madrid\\
ETSI Navales, Avda. Arco de la Victoria, s/n \\
28040 Madrid, Spain}
\email{dazagra@mat.ucm.es, fabricio.macia@upm.es}
\thanks{This work has been supported by grant Santander-Complutense 34/07-15844.}
\date{}
\maketitle

\begin{abstract}
In this article we examine the concentration and oscillation effects developed
by high-frequency eigenfunctions of the Laplace operator in a compact
Riemannian manifold. More precisely, we are interested in the structure of the
possible invariant semiclassical measures obtained as limits of Wigner
measures corresponding to eigenfunctions. These measures describe
simultaneously the concentration and oscillation effects developed by a
sequence of eigenfunctions. We present some results showing how to obtain
invariant semiclassical measures from eigenfunctions with prescribed
symmetries. As an application of these results, we give a simple proof of the
fact that in a manifold of constant positive sectional curvature, every
measure which is invariant by the geodesic flow is an invariant semiclassical measure.

\end{abstract}

\section{Introduction}

The analysis of the concentration and oscillation properties of high-frequency
solutions to Schr\"{o}dinger equations is a central theme in the study of the
correspondence principle in quantum mechanics.

Special attention has been devoted to the analysis of the high-frequency
behavior of eigenfunctions of the Laplace-Beltrami operator $\Delta_{M}$ of a
smooth compact Riemannian manifold $\left(  M,g\right)  $. The spectrum of
$-\Delta_{M}$ consists of a discrete set of eigenvalues $\left(  \lambda
_{k}\right)  $ tending to infinity. The corresponding eigenfunctions $\left(
\psi_{\lambda_{k}}\right)  $:%
\[
-\Delta_{M}\psi_{\lambda_{k}}\left(  x\right)  =\lambda_{k}\psi_{\lambda_{k}%
}\left(  x\right)  ,\quad x\in M,
\]
span the space $L^{2}\left(  M\right)  $ of square integrable functions with
respect to the Riemannian measure $dm_{g}$. In this setting, the
correspondence principle roughly asserts that high energy eigenfunctions
(\emph{i.e. }those corresponding to an eigenvalue $\lambda_{k}$ big enough)
exhibit behavior that is somehow related to the dynamics of the geodesic flow
on $\left(  M,g\right)  $ (see for instance \cite{deBievreSurv, NadTothJak,
Robert} for a more detailed account on this issue).

One associates to any eigenfunction $\psi_{\lambda_{k}}$ normalized in
$L^{2}\left(  M\right)  $ the probability distribution $\nu_{k}:=\left\vert
\psi_{\lambda_{k}}\right\vert ^{2}dm_{g}$. Then, given any sequence of
eigenvalues $\left(  \lambda_{k}\right)  $ tending to infinity, one wishes to
understand the structure of all possible limits of any sequence $\left(
\nu_{k}\right)  $ (which describe the regions on which $\psi_{\lambda_{k}}$
concentrates) and clarify how it is related to the geodesic flow in $\left(
M,g\right)  $. Instead of dealing directly with $\nu_{k}$ it is preferable to
associate to $\psi_{\lambda_{k}}$ a measure on the cotangent bundle $T^{\ast
}M$ that projects onto $\nu_{k}$. These lifts are distributions $W_{\psi
_{\lambda_{k}}}^{M}$ on $T^{\ast}M$ that act on test functions $a\in
C_{c}^{\infty}\left(  T^{\ast}M\right)  $ as:%
\[
\left\langle W_{\psi_{\lambda_{k}}}^{M},a\right\rangle =\int_{M}%
\operatorname*{op}\nolimits_{\lambda_{k}^{-1/2}}\left(  a\right)
\psi_{\lambda_{k}}\left(  x\right)  \overline{\psi_{\lambda_{k}}\left(
x\right)  }dm_{g}\left(  x\right)  .
\]
In the above formula $\operatorname*{op}\nolimits_{h}\left(  a\right)  $
stands for the semiclassical pseudodifferential operator of symbol $a$. There
is no canonical for defining $\operatorname*{op}\nolimits_{h}\left(  a\right)
$, but any two of those differ by a term which vanishes as $h\rightarrow0^{+}%
$. When $\operatorname*{op}_{h}\left(  a\right)  $ is given by the Weyl
quantization rule, the distribution $W_{\psi_{\lambda_{k}}}^{M}$ is usually
called the \emph{Wigner measure }of $\psi_{\lambda_{k}}$. More details can be
found, for instance, in \cite{Fo, Ge91c, GeLei, Li-Pau, Robert}.

Given a sequence of normalized eigenfunctions $\left(  \psi_{\lambda_{k}%
}\right)  $, the corresponding sequence $(W_{\psi_{\lambda_{k}}}^{M})$ is
bounded in the space of distributions on $T^{\ast}M$, and therefore has at
least one accumulation point. When $W_{\psi_{\lambda_{k}}}^{M}\rightharpoonup
\mu$ as $k\rightarrow\infty$ it is by now well known that the limit $\mu$ is a
Radon probability measure, supported on the unit cosphere bundle $S^{\ast}M$
that is invariant by the geodesic flow of $\left(  M,g\right)  $. In addition,
the limit of the probability densities $\nu_{k}$ may be recovered by
projecting $\mu$ onto $M$. Such a measure $\mu$ is usually called an
\emph{invariant semiclassical measure}, or a \emph{quantum limit }of $\left(
M,g\right)  $.

The problem of identifying the set of invariant semiclassical measures on a
manifold $\left(  M,g\right)  $ has attracted considerable attention in the
past thirty years. It turns out that this structure depends heavily on the
dynamical properties of the geodesic flow in $\left(  M,g\right)  $. For
instance, when it has the Anosov property (which is the case when $\left(
M,g\right)  $ has negative sectional curvature) it is known that a generic
invariant semiclassical measure (in a sense to be precised) must coincide with
the Liouville measure on $S^{\ast}M$ \cite{Shni, CdV85, Zelditch87, HMR,
GeLei, RudSar}. Moreover, it has been recently proved that quantum limits must
have positive Kolmogorov-Sinai entropy \cite{AnantAnosov, AnantNonn}. If the
geodesic flow is completely integrable, then the invariant semiclassical
measures are supported on certain invariant torii of the classical dynamics
\cite{Jak, NadTothJak, TothQInt}, which in some cases may consist on a single
geodesic \cite{JakZel, MaZoll}.

In this article, we address the question of whether an invariant semiclassical
measure $\mu$ may be realized by a sequence $\left(  \psi_{\lambda_{k}%
}\right)  $ of eigenfunctions with specified symmetries, \emph{i.e. }that are
invariant by a certain group $G$ of isometries of $\left(  M,g\right)  $. Our
main results, Theorem \ref{Thm Symm} and Corollary \ref{Cor Quot}, give a
prodecure to construct invariant semiclassical measures in a Riemannian
manifold which is the quotient of $M$ by a group of isometries that act
without fixed points. As an application of this, we show in Theorem
\ref{Thm CC}, using essentially only geometric arguments and standard
properties of spherical harmonics, that the set of invariant semiclassical
measures on a manifold $\left(  M,g\right)  $ of constant, positive sectional
curvature coincides with the whole set of probability measures on $S^{\ast}M$
that are invariant by the geodesic flow.

\medskip\noindent\textbf{Notation and conventions.} Through this article
$\left(  M,g\right)  $ will denote a smooth, compact, connected Riemannian
manifold. We shall denote the geodesic flow on $S^{\ast}M$ by $\phi_{t}^{M}$.

A generic orbit of $\phi_{t}^{M}$ in $S^{\ast}M$ will be denoted by $\gamma$;
when no confusion arises, we shall use the same notation to refer to its
projection on $M$.

Given a group $G$ of isometries of $\left(  M,g\right)  $, we shall use the
same notation to refer to the corresponding group of induced diffeomorphisms
on $S^{\ast}M$.

In what follows, we shall use the term measure to refer to a probability Radon measure.

\section{Main results}

Any group $G$ of isometries of $\left(  M,g\right)  $ defines a natural action
on the set of $\phi_{t}^{M}$-invariant measures $\mu$ in $S^{\ast}M$ by
push-forward $\phi_{\ast}\mu$, for $\phi\in G$.\footnote{The measure
$\phi_{\ast}\mu$ is defined by $\phi_{\ast}\mu\left(  \Omega\right)
:=\mu\left(  \phi^{-1}\left(  \Omega\right)  \right)  $ for every measurable
set $\Omega\subset S^{\ast}M$.} We denote the stabilizer subgroup of a measure
$\mu$ with respect to this action by:%
\[
G_{\mu}:=\left\{  \phi\in G:\phi_{\ast}\mu=\mu\right\}  .
\]
Recall that a $\phi_{t}^{M}$-invariant measure $\mu$ is ergodic if and only if
for every $\Omega\subset S^{\ast}M$ which is $\phi_{t}^{M}$-invariant,
$\mu\left(  \Omega\right)  $ must be either zero or one.

\begin{theorem}
\label{Thm Symm}Let $\left(  M,g\right)  $ be a compact Riemannian manifold
and $G$ a finite group of isometries of $M$. Let $\mu$ be a $\phi_{t}^{M}%
$-invariant ergodic measure on $S^{\ast}M$ that is an invariant semiclassical
measure realized by some sequence of $G_{\mu}$-invariant eigenfunctions. Then
\[
\left\langle \mu\right\rangle :=\frac{1}{\left\vert G\right\vert }\sum
_{\phi\in G}\phi_{\ast}\mu
\]
is an invariant semiclassical measure realized by some sequence of
$G$-invariant eigenfunctions.
\end{theorem}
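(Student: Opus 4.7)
\medskip
\noindent\textbf{Proof plan.}
The natural approach is to symmetrize the given eigenfunctions and show that the symmetrization realizes $\langle\mu\rangle$. Let $r:=[G:G_{\mu}]$ and fix coset representatives $\phi_{1},\dots,\phi_{r}$ of $G/G_{\mu}$, with $\phi_{1}=e$. Given a normalized sequence $(\psi_{\lambda_{k}})$ of $G_{\mu}$-invariant eigenfunctions realizing $\mu$, I would define
\[
\Psi_{k}:=\frac{1}{\sqrt{r}}\sum_{j=1}^{r}\psi_{\lambda_{k}}\circ\phi_{j}^{-1}.
\]
Since isometries commute with $\Delta_{M}$, each summand is an eigenfunction with eigenvalue $\lambda_{k}$, so $\Psi_{k}$ is too. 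Using that $\phi\in G$ permutes the cosets $\{\phi_{j}G_{\mu}\}$ and that $\psi_{\lambda_{k}}$ is fixed by $G_{\mu}$, one checks immediately that $\Psi_{k}\circ\phi^{-1}=\Psi_{k}$ for every $\phi\in G$.

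The rest of the argument consists of computing $\|\Psi_{k}\|_{L^{2}}$ and the Wigner measure $W^{M}_{\Psi_{k}}$ by bilinear expansion. The diagonal contribution is exactly what we want: since the Wigner measure of $\psi_{\lambda_{k}}\circ\phi_{j}^{-1}$ is $(\phi_{j})_{\ast}\mu$ (isometries conjugate $\operatorname{op}_{h}(a)$ into $\operatorname{op}_{h}(a\circ\Phi_{j})$ modulo $O(h)$), the diagonal yields
\[
\frac{1}{r}\sum_{j=1}^{r}(\phi_{j})_{\ast}\mu=\frac{1}{|G|}\sum_{\phi\in G}\phi_{\ast}\mu=\langle\mu\rangle,
\]
where the first equality uses $G_{\mu}$-invariance of $\mu$. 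Similarly the diagonal norm contributes $1$.

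The main obstacle, and the place where ergodicity enters essentially, is to show that all off-diagonal cross terms
\[
\bigl\langle \operatorname{op}_{\lambda_{k}^{-1/2}}(a)\,\psi_{\lambda_{k}}\circ\phi_{i}^{-1},\,\psi_{\lambda_{k}}\circ\phi_{j}^{-1}\bigr\rangle_{L^{2}(M)}
\]
vanish in the limit whenever $\phi_{j}^{-1}\phi_{i}\notin G_{\mu}$ (which is guaranteed for distinct representatives). Along a subsequence, the cross-Wigner distribution converges weakly to some measure $\nu$ on $T^{\ast}M$. Two facts combined force $\nu=0$. First, $\nu$ is $\phi^{M}_{t}$-invariant because both sequences are eigenfunctions with the same eigenvalue, so the usual commutator identity gives $\langle[-\Delta,\operatorname{op}_{h}(a)]\psi_{\lambda_{k}}\circ\phi_{i}^{-1},\psi_{\lambda_{k}}\circ\phi_{j}^{-1}\rangle=0$. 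Second, a semiclassical Cauchy--Schwarz estimate based on $\operatorname{op}_{h}(a)^{\ast}\operatorname{op}_{h}(a)=\operatorname{op}_{h}(|a|^{2})+O(h)$ shows
\[
\Bigl|\int a\,d\nu\Bigr|^{2}\le\int|a|^{2}\,d(\phi_{i})_{\ast}\mu,
\]
and symmetrically with $(\phi_{j})_{\ast}\mu$. Thus $\nu$ is absolutely continuous with respect to both $(\phi_{i})_{\ast}\mu$ and $(\phi_{j})_{\ast}\mu$. But these are ergodic measures, and they are distinct because $\phi_{j}^{-1}\phi_{i}\notin G_{\mu}$; distinct ergodic invariant measures are mutually singular, so $\nu=0$.

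Putting the pieces together, $\|\Psi_{k}\|_{L^{2}}\to 1$ and $W^{M}_{\Psi_{k}}\rightharpoonup\langle\mu\rangle$, so $\Psi_{k}/\|\Psi_{k}\|_{L^{2}}$ is a normalized sequence of $G$-invariant eigenfunctions realizing $\langle\mu\rangle$, as required. The delicate point throughout is the cross-Wigner vanishing in the last paragraph: the symmetrization strategy itself is formal, and ergodicity is what guarantees that cross interference disappears in the high-frequency limit.
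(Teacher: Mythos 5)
Your proposal is correct and follows essentially the same strategy as the paper: average the $G_{\mu}$-invariant eigenfunctions over coset representatives of $G_{\mu}$ in $G$ and kill the off-diagonal cross terms using the fact that the pushforwards $\phi_{\ast}\mu$ for distinct cosets are distinct ergodic measures, hence mutually singular. The only substantive difference is presentational: you prove the asymptotic orthogonality of Wigner measures from scratch via the Cauchy--Schwarz/absolute-continuity argument, where the paper simply invokes it as the known property (\ref{ort}), and you verify $G$-invariance of the symmetrization directly from the left-coset decomposition instead of appealing, as the paper does, to Hall's theorem on common representatives of left and right cosets.
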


Theorem \ref{Thm Symm} can be applied to show the existence of invariant
semiclassical measures on quotient Riemannian manifolds. More precisely, if
$G$ is a group of isometries that acts without fixed points on $M$ then $M/G$
is a Riemannian manifold in a natural way. Denote the natural projection by
\[
\pi:S^{\ast}M\rightarrow S^{\ast}\left(  M/G\right)  .
\]
The next result shows how invariant semiclassical measures on $M/G$ are
constructed from those on $M$.

\begin{corollary}
\label{Cor Quot}Let $\left(  M,g\right)  $ be a compact Riemannian manifold,
and $G$ be a group of isometries of $M$ that acts without fixed points. Let
$\mu$ be a $\phi_{t}^{M}$-invariant, ergodic measure in $S^{\ast}M$ that is an
invariant semiclassical measure realized by some sequence of $G_{\mu}%
$-invariant eigenfunctions in $M$. Then $\pi_{\ast}\mu$ is an invariant
semiclassical measure on the quotient Riemannian manifold $M/G$.
\end{corollary}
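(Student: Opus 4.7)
The plan is to combine Theorem \ref{Thm Symm} with a descent argument to the quotient manifold. By Theorem \ref{Thm Symm}, the averaged measure $\langle\mu\rangle$ is an invariant semiclassical measure on $M$ realized by a sequence $(\psi_{\lambda_{k}})$ of $G$-invariant, $L^{2}(M)$-normalized eigenfunctions. Because the natural projection $\pi:S^{\ast}M\rightarrow S^{\ast}(M/G)$ satisfies $\pi\circ\phi=\pi$ for every $\phi\in G$, we have $\pi_{\ast}(\phi_{\ast}\mu)=\pi_{\ast}\mu$ and hence $\pi_{\ast}\langle\mu\rangle=\pi_{\ast}\mu$. Thus it suffices to exhibit a sequence of $L^{2}(M/G)$-normalized eigenfunctions on $M/G$ whose Wigner measures converge to $\pi_{\ast}\langle\mu\rangle$, and the natural candidates are the descents of the $\psi_{\lambda_{k}}$.

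Since $G$ acts freely by isometries, the quotient map $p:M\rightarrow M/G$ is a $|G|$-sheeted Riemannian covering. Each $G$-invariant function $\psi_{\lambda_{k}}$ factors as $\psi_{\lambda_{k}}=\tilde{\psi}_{k}\circ p$ for a unique $\tilde{\psi}_{k}$ on $M/G$, and since $p^{\ast}$ intertwines the two Laplacians, $\tilde{\psi}_{k}$ is an eigenfunction of $-\Delta_{M/G}$ with the same eigenvalue $\lambda_{k}$. The change-of-variables identity $\int_{M}f\circ p\,dm_{g}=|G|\int_{M/G}f\,dm_{g_{M/G}}$ shows that $\tilde{\varphi}_{k}:=\sqrt{|G|}\,\tilde{\psi}_{k}$ is $L^{2}(M/G)$-normalized.

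The core step is to relate $W_{\tilde{\varphi}_{k}}^{M/G}$ to $W_{\psi_{\lambda_{k}}}^{M}$. Given $a\in C_{c}^{\infty}(T^{\ast}(M/G))$, the pullback $a\circ\pi$ is a $G$-invariant symbol on $T^{\ast}M$. Using a partition of unity subordinate to an open cover of $M/G$ by evenly covered sets, and working in local charts in which $p$ is an isometry, one shows that
\[
\operatorname{op}_{\lambda_{k}^{-1/2}}(a\circ\pi)\psi_{\lambda_{k}}=\bigl(\operatorname{op}_{\lambda_{k}^{-1/2}}(a)\tilde{\psi}_{k}\bigr)\circ p+o(1)
\]
in $L^{2}(M)$. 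Combining this with the covering integration formula and the quadratic scaling $W_{\tilde{\varphi}_{k}}^{M/G}=|G|\,W_{\tilde{\psi}_{k}}^{M/G}$ gives
\[
\bigl\langle W_{\tilde{\varphi}_{k}}^{M/G},a\bigr\rangle =\bigl\langle W_{\psi_{\lambda_{k}}}^{M},a\circ\pi\bigr\rangle +o(1).
\]
Passing to the limit identifies any accumulation point of $(W_{\tilde{\varphi}_{k}}^{M/G})$ with $\pi_{\ast}\langle\mu\rangle=\pi_{\ast}\mu$, which is therefore a semiclassical measure on $M/G$.

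The only delicate point is the comparison of the two semiclassical pseudodifferential calculi through the covering map: one needs $\operatorname{op}_{h}(a\circ\pi)$ and the pullback by $p$ of $\operatorname{op}_{h}(a)$ to agree, modulo errors that vanish as $h\rightarrow0^{+}$, on the $G$-invariant subspace. This follows in a routine way from the local nature of pseudodifferential operators and the fact that $p$ is a local isometry, but it is the one place where genuine care is required. Invariance of $\pi_{\ast}\mu$ under the geodesic flow on $M/G$ requires no separate argument, since it is automatic for any semiclassical measure associated with a sequence of eigenfunctions.
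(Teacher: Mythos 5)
Your argument is correct and follows essentially the same route as the paper: apply Theorem \ref{Thm Symm} to obtain $G$-invariant eigenfunctions realizing $\left\langle \mu\right\rangle$, note that $\pi_{\ast}\left\langle \mu\right\rangle=\pi_{\ast}\mu$, descend the eigenfunctions to $M/G$, and compare the Wigner measures of $u$ and of its descent through local charts adapted to the covering (this last step is exactly the content of the paper's Lemma \ref{LemmaWignerComp}). Your treatment of the $\sqrt{\left\vert G\right\vert}$ normalization is a detail the paper leaves implicit, but it does not change the approach.
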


The proofs of Theorem \ref{Thm Symm} and Corollary \ref{Cor Quot} are
presented in Section \ref{Section Sym}. These results may be applied to
characterize the set of semiclassical invariant measures on manifolds of
positive constant sectional curvature (recall that these manifolds are
isometric to quotients of the standard sphere $\mathbb{S}^{d}$ by a group of
isometries acting without fixed points, see \cite{WolfCC}). In fact, combining
Corollary \ref{Cor Quot} with rather elementary geometric arguments, and
standard properties of spherical harmonics, we shall give in Section
\ref{Section CC} a proof of the following theorem.

\begin{theorem}
\label{Thm CC}Let $\left(  M,g\right)  $ be a Riemannian manifold of positive
constant sectional curvature. Then any $\phi_{t}^{M}$-invariant measure on
$S^{\ast}M$ is an invariant semiclassical measure.
\end{theorem}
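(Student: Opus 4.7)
The plan is to combine the Wolf classification of space forms of constant positive sectional curvature with Corollary \ref{Cor Quot}, reducing everything to a concentration statement on the round sphere. By the classification \cite{WolfCC}, $(M,g)$ is isometric to $\mathbb{S}^d/G$ for some finite subgroup $G\subset O(d+1)$ acting freely, and the projection $\pi:S^*\mathbb{S}^d\to S^*M$ intertwines the two geodesic flows (both periodic). Thus $\phi_t^M$-invariant measures on $S^*M$ are precisely the pushforwards $\pi_*\mu$ of $G$-invariant $\phi_t^{\mathbb{S}^d}$-invariant measures $\mu$ on $S^*\mathbb{S}^d$, and the strategy is to produce enough ergodic $\mu$'s via Corollary \ref{Cor Quot} and then close up by convexity.

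For the ergodic step, since $\phi_t^{\mathbb{S}^d}$ is periodic, the ergodic $\phi_t^{\mathbb{S}^d}$-invariant measures on $S^*\mathbb{S}^d$ are exactly the normalized arc-length measures $\delta_\gamma$ on oriented great circles $\gamma$. For a fixed $\gamma$, I would realize $\delta_\gamma$ as a Wigner limit of highest-weight spherical harmonics $\psi_l(x)=c_l(x_1+ix_2)^l$ aligned with $\gamma$; the convergence $W_{\psi_l}\rightharpoonup\delta_\gamma$ is a classical computation on the sphere (see \cite{JakZel}). The stabilizer $G_{\delta_\gamma}\subset G$ of the oriented $\gamma$ embeds into $SO(2)\times O(d-1)$, acting on $\psi_l$ by the character $R_\theta\mapsto e^{-il\theta}$ of its $SO(2)$-factor and trivially in the transverse $(d-1)$ directions. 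Since $G_{\delta_\gamma}$ is finite, its image in $SO(2)$ is cyclic of some order $N$, so restricting $l$ to $N\mathbb{Z}$ produces a $G_{\delta_\gamma}$-invariant sequence. Corollary \ref{Cor Quot} then yields that $\pi_*\delta_\gamma$ is an invariant semiclassical measure on $M$ for every oriented great circle $\gamma$. Because every closed geodesic on $M$ lifts to a great circle on $\mathbb{S}^d$, this exhausts all ergodic $\phi_t^M$-invariant measures on $S^*M$.

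To extend from ergodic to arbitrary invariants, I would show that the set of invariant semiclassical measures on $M$ is weak-$*$ closed (diagonal extraction) and convex, so that the ergodic decomposition of a general $\phi_t^M$-invariant measure into orbital measures $\pi_*\delta_\gamma$ concludes the argument. The main obstacle will be the convexity step: given quantum limits $\mu_1,\mu_2$, one would extract a common sequence of eigenvalues $\lambda_k\to\infty$ (available thanks to the high multiplicity of the spectrum of $M$ inherited from the sphere) and form normalized combinations $\alpha\psi^{(1)}_{\lambda_k}+\beta\psi^{(2)}_{\lambda_k}$, checking that the cross-Wigner terms $\alpha\bar\beta W_{\psi^{(1)}_{\lambda_k},\psi^{(2)}_{\lambda_k}}$ vanish in the weak-$*$ limit. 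For highest-weight harmonics adapted to distinct great circles this should follow from a stationary-phase argument or a random-phase averaging between $\alpha$ and $\beta$; alternatively, one can bypass abstract convexity and construct eigenfunctions realizing each finite convex combination $\sum_j t_j\pi_*\delta_{\gamma_j}$ directly, as a superposition of degree-$l$ highest-weight harmonics aligned with the various $\gamma_j$.
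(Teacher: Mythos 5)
Your proposal follows essentially the same route as the paper: reduce via Wolf's classification, Corollary \ref{Cor Quot} and Theorem \ref{Thm Symm} to realizing each $\delta_{\gamma}$ on $\mathbb{S}^{d}$ by highest-weight harmonics adapted to $\gamma$ with degrees restricted to kill the character of the stabilizer $G_{\delta_{\gamma}}$, and then pass to general invariant measures by Krein--Milman together with closedness and convexity of the set of quantum limits (a step the paper likewise leaves implicit, and which for the sphere goes back to \cite{JakZel}). The only inaccuracy is your assertion that $G_{\delta_{\gamma}}$ acts trivially in the $(d-1)$ transverse directions --- an element of the stabilizer has the form $R_{\theta}\oplus Q$ with $Q\in SO(d-1)$ possibly nontrivial, and controlling this part is exactly the content of the paper's Lemma \ref{Lemma cyclic} and of the commuting rotation $\chi$ in the proof of Proposition \ref{Prop refinitiva} --- but the slip is harmless here, since $\psi_{l}$ depends only on the two coordinates of the plane of $\gamma$, so that $\psi_{l}\circ\phi=e^{il\theta}\psi_{l}$ regardless of $Q$.
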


When $\left(  M,g\right)  $ is the standard sphere or the real projective
space, Theorem \ref{Thm CC} has been proved in \cite{JakZel}. If we further
restrict ourselves to the class of homogeneous compact manifolds of constant
curvature then it turns out that those with positive curvature are precisely
the spaces having the property that the set of invariant semiclassical
measures coincides with the whole set of $\phi_{t}^{M}$-invariant measures.
This is due to the fact that there are no such spaces for $K<0$ and, when
$K=0$, such a space has to be isometric to the flat torus $\mathbb{T}^{d}$
(see for instance \cite{WolfCC}). In the latter case, a result by Bourgain
\cite{Jak} asserts that the projection on $\mathbb{T}^{d}$ of every
semiclassical invariant measure is absolutely continuous with respect to the
Lebesgue measure. Therefore, a measure supported on a geodesic cannot be an
invariant semiclassical measure.

\begin{corollary}
Let $\left(  M,g\right)  $ be a compact homogeneous Riemannian manifold of
constant sectional curvature $K$. All $\phi_{t}^{M}$-invariant measures on
$S^{\ast}M$ are invariant semiclassical measures if and only if $K>0$.
\end{corollary}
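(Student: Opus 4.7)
The plan is to split the equivalence and reduce each direction to results already at hand. The backward implication is immediate: if $K>0$, then Theorem~\ref{Thm CC} asserts exactly that every $\phi_{t}^{M}$-invariant probability measure on $S^{\ast}M$ is an invariant semiclassical measure, so the property holds.

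For the forward implication I would invoke the classification of compact homogeneous Riemannian manifolds of constant sectional curvature cited in \cite{WolfCC}: no such manifold exists for $K<0$, and for $K=0$ the space must be isometric to a flat torus $\mathbb{T}^{d}$ (necessarily with $d\ge 2$, since the notion of sectional curvature presupposes a $2$-plane). It therefore suffices to exhibit on $\mathbb{T}^{d}$ a $\phi_{t}^{\mathbb{T}^{d}}$-invariant probability measure on $S^{\ast}\mathbb{T}^{d}$ that is \emph{not} an invariant semiclassical measure. The obstruction to supply is Bourgain's theorem, recalled just before the statement of the corollary and proved in \cite{Jak}: the projection onto $\mathbb{T}^{d}$ of any invariant semiclassical measure is absolutely continuous with respect to the Haar measure. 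For the explicit counterexample I would pick a rational direction $\xi_{0}\in\mathbb{S}^{d-1}$; the geodesic in $\mathbb{T}^{d}$ with initial covector $\xi_{0}$ closes up into a $1$-dimensional submanifold $\gamma\subset\mathbb{T}^{d}$, and the normalized arc-length measure supported on the closed orbit $\{(x,\xi_{0}):x\in\gamma\}\subset S^{\ast}\mathbb{T}^{d}$ is a $\phi_{t}^{\mathbb{T}^{d}}$-invariant probability measure whose projection to $\mathbb{T}^{d}$ is concentrated on $\gamma$ and hence singular with respect to the Haar measure.

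I do not expect any serious obstacle: Theorem~\ref{Thm CC} does all of the heavy lifting in one direction, Wolf's classification disposes of $K<0$, and Bourgain's theorem together with the explicit closed-orbit measure handles $K=0$. The only routine verification is that arc-length along a closed geodesic, lifted to the constant covector $\xi_{0}$, is geodesic-flow-invariant, which is immediate since this set is by construction a single closed orbit of $\phi_{t}^{\mathbb{T}^{d}}$.
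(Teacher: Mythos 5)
Your proposal is correct and follows essentially the same route as the paper: the $K>0$ direction is Theorem~\ref{Thm CC}, the $K<0$ case is vacuous by Wolf's classification, and for $K=0$ the flat torus admits a closed-geodesic measure whose projection is singular, contradicting Bourgain's absolute-continuity result from \cite{Jak}. Your explicit choice of a rational direction to guarantee a closed orbit is a small but welcome elaboration of the paper's terser "measure supported on a geodesic".
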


\section{\label{Section Sym}Symmetric eigenfunctions}

First of all, let us recall some of the basic properties of Wigner measures.
Let $\left(  u_{k}\right)  $ be a sequence in $L^{2}\left(  M\right)  $ such
that $W_{u_{k}}^{M}\rightharpoonup\mu$ as $k\rightarrow\infty$ for some
measure $\mu$ on $T^{\ast}M$. The following properties are well known (see for
instance \cite{Ge91c, GeLei}):%
\begin{equation}
\text{if }\phi:M\rightarrow M\text{ is a diffeomorphism then }W_{u_{k}%
\circ\phi}^{M}\rightharpoonup\phi_{\ast}\mu\text{ as }k\rightarrow
\infty.\label{diff}%
\end{equation}
Let $\left(  v_{k}\right)  $ be some other sequence such that $W_{v_{k}}%
^{M}\rightharpoonup\nu$ as $k\rightarrow\infty$; then%
\begin{equation}
\text{if }\mu\perp\nu\text{ then }W_{u_{k}+v_{k}}^{M}\rightharpoonup\mu
+\nu\text{ as }k\rightarrow\infty.\label{ort}%
\end{equation}

\begin{proof}
[Proof of Theorem \ref{Thm Symm}]Start noticing that given an isometry $\phi$,
the measure $\phi_{\ast}\mu$ is a $\phi_{t}^{M}$-invariant ergodic measure
whenever $\mu$ is. Moreover, it is not hard to see, using Birkhoff's ergodic
theorem, that either $\phi_{\ast}\mu=\mu$ or $\mu\perp\phi_{\ast}\mu$.

Indeed, for $\nu\in\left\{  \mu,\phi_{\ast}\mu\right\}  $ there exists a
measurable set $F_{\nu}\subset S^{\ast}M$ with $\nu\left(  F_{\nu}\right)  =1$
and
\[
\lim_{T\rightarrow\infty}\frac{1}{T}\int_{0}^{T}a\left(  \phi_{t}\left(
x_{0},\xi_{0}\right)  \right)  dt=\int_{S^{\ast}M}a\left(  x,\xi\right)
\nu\left(  dx,d\xi\right)  ,
\]
for every $\left(  x_{0},\xi_{0}\right)  \in F_{\nu}$ and $a\in C\left(
S^{\ast}M\right)  $. Therefore, if $F_{\mu}\cap F_{\phi_{\ast}\mu}%
\neq\emptyset$ then necessarily $\mu=\phi_{\ast}\mu$.

Now, the measures $\phi_{\ast}\mu$, $\phi\in G$, are pairwise distinct if and
only if $G_{\mu}=\left\{  \operatorname*{Id}\right\}  $. In this case, it is
easy to construct a sequence of eigenfunctions for which the conclusion holds.
Let $\left(  \psi_{\lambda_{k}}\right)  $ be such that
\begin{equation}
W_{\psi_{\lambda_{k}}}^{M}\rightharpoonup\mu,\qquad\text{as }k\rightarrow
\infty, \label{convSH}%
\end{equation}
and define the average
\[
\left\langle \psi_{\lambda_{k}}\right\rangle _{G}:=\frac{1}{\left\vert
G\right\vert }\sum_{\phi\in G}\psi_{\lambda_{k}}\circ\phi.
\]
Clearly, this is a $G$-invariant eigenfunction of $\Delta_{M}$ (that might
possibly vanish identically). Now, because of (\ref{diff}), $W_{\psi
_{\lambda_{k}}\circ\phi}^{M}$ converges to the measure $\phi_{\ast}\mu$, and
since all the measures $\phi_{\ast}\mu$ are distinct, they must be mutually
disjoint. Now, the asymptotic orthogonality property (\ref{ort}) then implies:%
\begin{equation}
W_{\left\langle \psi_{\lambda_{k}}\right\rangle _{G}}^{M}\rightharpoonup
\frac{1}{\left\vert G\right\vert }\sum_{\phi\in G}\phi_{\ast}\mu=\left\langle
\mu\right\rangle ,\qquad\text{as }k\rightarrow\infty. \label{ConvAver}%
\end{equation}
Note that, in particular, (\ref{ConvAver}) implies that the sequences of
averages $\left\langle \psi_{\lambda_{k}}\right\rangle _{G}$ is not
identically equal to zero, and therefore can be normalized in $L^{2}\left(
M\right)  $.

Suppose now that $G_{\mu}$ is non-trivial. By hypothesis, there exists
$\left(  \psi_{\lambda_{k}}\right)  $ such that (\ref{convSH}) holds and
$\psi_{\lambda_{k}}\circ\phi=\psi_{\lambda_{k}}$ for every $\phi\in G_{\mu}$.
Let $\phi_{1}=\operatorname*{Id},\phi_{2},...,\phi_{\left\vert G\right\vert
/\left\vert G_{\mu}\right\vert }$ be a common system of representatives for
the left cosets $\phi G_{\mu}$ and the right cosets $G_{\mu}\phi$ of $G_{\mu}$
in $G$ (whose existence is ensured by a classical theorem of P. Hall, see for
instance \cite{HallComb} Theorem 5.1.7). Given any $\phi\in G$, one has
$\psi_{\lambda_{k}}\circ\rho=\psi_{\lambda_{k}}\circ\phi$ for every $\rho\in
G_{\mu}\phi$; therefore, the average satisfies:%
\[
\left\langle \psi_{\lambda_{k}}\right\rangle _{G}=\frac{\left\vert G_{\mu
}\right\vert }{\left\vert G\right\vert }\sum_{l=1}^{\left\vert G\right\vert
/\left\vert G_{\mu}\right\vert }\psi_{\lambda_{k}}\circ\phi_{l}.
\]
Since there is a bijection between the orbit $\left\{  \phi_{\ast}\mu:\phi\in
G\right\}  $ and the set of left cosets of $G_{\mu}$ in $G$, all measures
$\mu,\left(  \phi_{2}\right)  _{\ast}\mu,...,\left(  \phi_{j}\right)  _{\ast
}\mu$ must be distinct. The conclusion then follows using the same argument we
gave for $\left\vert G_{\mu}\right\vert =1$.\medskip
\end{proof}

To prove Corollary \ref{Cor Quot} just take into account the following: (i)
the eigenfunctions of $\Delta_{M/G}$ are induced (via $\pi$) precisely by the
eigenfunctions of $\Delta_{M}$ that are $G$-invariant; (ii) given any measure
$\mu$ in $M$, $\pi_{\ast}\mu=\pi_{\ast}\left\langle \mu\right\rangle $ and

\begin{lemma}
\label{LemmaWignerComp}It is possible to give a definition of Wigner measures
in $M$ and $M/G$ such that for every $u\in L^{2}\left(  M\right)  $ which is
$G$-invariant, one has that if $W_{u}^{M}$ is the Wigner measure of $u$ in $M$
then $W_{u}^{M/G}=\pi_{\ast}W_{u}^{M}$ is the Wigner measure of $u$ in $M/G$.
\end{lemma}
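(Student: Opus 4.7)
The strategy rests on the fact that, since $G$ acts freely by isometries, $\pi:M\to M/G$ is a Riemannian covering, hence a local isometry. The $O(h)$-ambiguity in the choice of semiclassical quantization (noted in the introduction) allows us to build pseudodifferential calculi on both sides from atlases compatible under the cover, after which the identity $W_u^{M/G}=\pi_{\ast}W_u^{M}$ will follow by a local chart-by-chart comparison.

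First I would fix a finite cover $\{U_\alpha\}$ of $M/G$ by evenly covered coordinate charts $\kappa_\alpha:U_\alpha\to V_\alpha\subset\mathbb{R}^d$ with a subordinate partition of unity $\{\chi_\alpha\}$. For each $\alpha$ I pick a distinguished sheet $\widetilde{U}_\alpha\subset\pi^{-1}(U_\alpha)$ on which $\pi$ is a diffeomorphism, and set $\widetilde{\kappa}_\alpha:=\kappa_\alpha\circ\pi|_{\widetilde{U}_\alpha}$. The preimage decomposes as $\pi^{-1}(U_\alpha)=\bigsqcup_{\phi\in G}\phi(\widetilde{U}_\alpha)$, and each sheet inherits the chart $\widetilde{\kappa}_\alpha\circ\phi^{-1}$. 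I then define both quantizations by the identical Weyl recipe: $\operatorname{op}_h^{M/G}(a)$ assembled from $\{\kappa_\alpha,\chi_\alpha\}$, and $\operatorname{op}_h^{M}(b)$ assembled from the atlas $\{\phi(\widetilde{U}_\alpha)\}_{\alpha,\phi}$ with cutoffs $(\chi_\alpha\circ\pi)\mathbf{1}_{\phi(\widetilde{U}_\alpha)}$. By construction, a symbol $a\in C_c^{\infty}(T^{\ast}(M/G))$ and its lift $\pi^{\ast}a$ have identical coordinate expressions on paired charts.

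The verification is then local. For a $G$-invariant $u\in L^2(M)$ with descent $\widetilde{u}\in L^2(M/G)$ characterized by $u=\widetilde{u}\circ\pi$, the restriction of $u$ to any sheet $\phi(\widetilde{U}_\alpha)$ is just the pullback of $\widetilde{u}|_{U_\alpha}$ under $\pi$. Pseudolocality of the semiclassical Weyl calculus makes the off-diagonal matrix elements coupling distinct sheets $O(h^{\infty})$, so the pairing $\langle\operatorname{op}_h^{M}(\pi^{\ast}a)u,u\rangle_{L^2(M)}$ splits sheet by sheet; each of the $|G|$ sheets above $U_\alpha$ contributes exactly what $U_\alpha$ contributes to $\langle\operatorname{op}_h^{M/G}(a)\widetilde{u},\widetilde{u}\rangle_{L^2(M/G)}$, yielding $\langle W_u^{M},\pi^{\ast}a\rangle=|G|\,\langle W_{\widetilde{u}}^{M/G},a\rangle$. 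Since $\|u\|_{L^2(M)}^{2}=|G|\,\|\widetilde{u}\|_{L^2(M/G)}^{2}$, the combinatorial factor is absorbed once both Wigner distributions are normalized to be probability measures on their respective cosphere bundles, and the claim $W_u^{M/G}=\pi_{\ast}W_u^{M}$ follows on every test function $a$.

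The main obstacle is the pseudolocality step: verifying uniformly, in the chart decomposition, that the semiclassical Weyl kernel on $M$ does not leak significantly between distinct sheets of $\pi$ as $h\to 0^{+}$. This is a standard property of semiclassical pseudodifferential operators — their kernels concentrate on the diagonal up to $O(h^{\infty})$ — but it is the one technical point on which the naive sheet-by-sheet identity hinges, and so the point at which the compatibility of the two calculi must be carefully justified.
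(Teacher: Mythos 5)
Your proposal is correct and follows essentially the same route as the paper, which likewise constructs the Wigner measures from an atlas on $M/G$ together with its sheet-by-sheet lift to $M$ via the covering $\pi$ and then compares chart by chart; you simply supply the details (pseudolocality for the cross-sheet terms and the bookkeeping of the factor $\left\vert G\right\vert$ through the normalizations) that the paper leaves to the reference \cite{MaZ}.
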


The proof of this result follows the classical construction via local charts
(see, for instance \cite{MaZ}, Section 3); it suffices to construct the Wigner
measures from an atlas in $\left(  U_{i},\varphi_{i}\right)  $, $i=1,..r$, in
$M/G$ and an atlas $\left(  V_{i,j},\tilde{\varphi}_{i,j}\right)  $ in $M$
such that $V_{i,j}\subset\pi^{-1}\left(  U_{i}\right)  $, and $\tilde{\varphi
}_{i,j}=\pi\circ\varphi_{i}$, where $\pi:M\rightarrow M/G$ is the natural
projection. We emphasize the fact that the set of invariant semiclassical
measures on a manifold $\left(  M,g\right)  $ does not depend of the notion of
Wigner measure used to realize it (see, for instance, \cite{GeLei, MaZ}).

\section{\label{Section CC}Positive sectional curvature}

We now turn to analyze the structure of invariant semiclassical measures in
manifolds of constant, positive sectional curvature. Any such manifold
$\left(  M,g\right)  $ is the quotient of $\mathbb{S}^{d}$ by a group $G$ of
isometries that acts without fixed points. Recall that the eigenvalues of
$-\Delta_{\mathbb{S}^{d}}$ are $\lambda_{k}=k\left(  k+d-1\right)  $ and the
corresponding eigenfunctions $\psi_{k}$ are spherical harmonics of degree $k$;
the eigenfunctions of $-\Delta_{M}$ are precisely those spherical harmonics
that are $G$-invariant.

Since every $\phi_{t}^{M}$-invariant measure in $S^{\ast}M$ may be
approximated by finite convex combinations of measures $\delta_{\gamma}$ with
$\gamma$ a geodesic (by the Krein-Milman theorem), Theorem \ref{Thm CC} is
then a consequence of Lemma \ref{LemmaWignerComp} and of the following result.

\begin{proposition}
\label{Proposition inter}Let $G$ be a group of order $p$ formed by isometries
of $\mathbb{S}^{d}$ acting without fixed points. Given any geodesic $\gamma$
on $S^{\ast}\mathbb{S}^{d}$ there exist a sequence $\left(  \psi_{kp}\right)
$ of normalized, $G$-invariant spherical harmonics such that $\delta_{\gamma}$
is an invariant semiclassical measure realized by $\left(  \psi_{kp}\right)  $.
\end{proposition}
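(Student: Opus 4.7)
\medskip

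\textbf{Proof plan.} The strategy combines the classical Gaussian-beam construction on $\mathbb{S}^d$, the averaging trick from the proof of Theorem \ref{Thm Symm}, and Lemma \ref{LemmaWignerComp}. Because $O(d+1)$ acts transitively on oriented great circles of $\mathbb{S}^d$, I first reduce to the model equatorial geodesic $\gamma_0 := \mathbb{S}^d\cap\{x_3=\cdots=x_{d+1}=0\}$, replacing $G$ by a conjugate subgroup if necessary.

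The basic building blocks are the highest-weight spherical harmonics
\[
\varphi_n(x) := c_n(x_1+ix_2)^n, \qquad \|\varphi_n\|_{L^2(\mathbb{S}^d)}=1,
\]
with $c_n \asymp n^{(d-1)/4}$ from a Gamma-function computation. The key input, standard in the spherical-harmonic literature (see \cite{JakZel}), is the concentration
\[
W^{\mathbb{S}^d}_{\varphi_n}\rightharpoonup \delta_{\gamma_0} \qquad \text{as } n\to\infty.
\]
I would then restrict to degrees $n=kp$ and form the $G$-average
\[
\psi_{kp} := Z_k^{-1}\sum_{g\in G}\varphi_{kp}\circ g
\]
with $Z_k$ an $L^2$-normalizing constant. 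The reason for forcing $p\mid n$ is to make $\psi_{kp}$ asymptotically nontrivial: since $G$ acts without fixed points, the stabilizer $H$ of $\gamma_0$ in $G$ acts on $\gamma_0$ by rotations only (a reflection of the $(x_1,x_2)$-plane would fix two points of $\gamma_0$), hence is cyclic of some order $q\mid p$; under this cyclic group $\varphi_{kp}$ transforms by a character which is trivial precisely when the degree is a multiple of $p$, so the summands in each $H$-coset add in phase. For cosets outside $H$, the rotate $\varphi_{kp}\circ g$ concentrates on the distinct geodesic $g^{-1}\gamma_0$, and the Birkhoff argument at the start of the proof of Theorem \ref{Thm Symm} shows that the limit measures $\delta_{g^{-1}\gamma_0}$ in distinct cosets are mutually singular. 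Combining (\ref{diff}) with the asymptotic orthogonality property (\ref{ort}) cosetwise then yields
\[
W^{\mathbb{S}^d}_{\psi_{kp}}\rightharpoonup \frac{1}{|G|}\sum_{g\in G}\delta_{g\gamma_0} = \langle\delta_{\gamma_0}\rangle,
\]
and Lemma \ref{LemmaWignerComp} converts this into
\[
W^{M}_{\psi_{kp}} = \pi_* W^{\mathbb{S}^d}_{\psi_{kp}} \rightharpoonup \pi_*\langle\delta_{\gamma_0}\rangle = \delta_{\pi(\gamma_0)},
\]
which is $\delta_\gamma$ viewed on the quotient $M=\mathbb{S}^d/G$, as required.

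The hardest step is the Wigner-measure calculation for the highest-weight vector $\varphi_n$: although the fact that $|\varphi_n|^2\,dm_g$ is a Gaussian tube of width $n^{-1/2}$ around $\gamma_0$ is classical, promoting this to $W^{\mathbb{S}^d}_{\varphi_n}\rightharpoonup\delta_{\gamma_0}$ requires a careful stationary-phase computation testing the pseudodifferential quantization at the semiclassical scale $\lambda_n^{-1/2}\asymp n^{-1}$ against $\varphi_n$. A secondary technical point is the bookkeeping of the normalizing constant $Z_k$ and of the non-degeneracy of the average for an arbitrary finite subgroup $G\subset O(d+1)$, rather than merely for the cyclic-rotation subgroups where the character computation is most transparent.
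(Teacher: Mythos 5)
Your argument is correct and rests on the same pillars as the paper's proof: highest-weight harmonics $c_n(x_1+ix_2)^n$ concentrating on a great circle (quoted from \cite{JakZel} in both cases), invariance of these harmonics under the cyclic stabilizer of the geodesic once the degree is a multiple of $p$, coset-wise averaging over $G$ combined with (\ref{diff}) and the asymptotic orthogonality (\ref{ort}), and Lemma \ref{LemmaWignerComp} to transfer the result to the quotient. The differences are nevertheless worth recording. The paper factors the argument through Theorem \ref{Thm Symm} (which requires noting that $\delta_{\gamma}$ is ergodic) and the intermediate Proposition \ref{Prop refinitiva}; you inline the averaging, which is legitimate since mutual singularity of the limits $\delta_{g^{-1}\gamma_{0}}$ attached to distinct cosets is immediate for point masses on distinct great circles --- the Birkhoff argument you invoke is unnecessary here. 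More substantially, you normalize the \emph{geodesic} to the equator and conjugate the group, whereas the paper normalizes the \emph{group} to the diagonal form (\ref{gen}) (after splitting into $d$ even and odd and identifying $\mathbb{R}^{d+1}$ with $\mathbb{C}^{n}$) and must then transport the model harmonic to the given invariant geodesic by an element $\chi\in SU(n)$ commuting with the generator. Your route replaces Lemma \ref{Lemma cyclic} and the construction of $\chi$ by the single observation that the stabilizer $H$ of the equator restricts injectively to a group of rotations of the $(x_{1},x_{2})$-plane (both injectivity and the absence of reflections follow from fixed-point-freeness), so that $\varphi_{n}\circ h=e^{\pm in\theta_{h}}\varphi_{n}$ with $e^{i\theta_{h}}$ a root of unity of order dividing $\left\vert H\right\vert$; this is cleaner and treats both parities of $d$ uniformly. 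Two small imprecisions to fix: the character of $H$ acting on $\varphi_{n}$ is trivial precisely when $\left\vert H\right\vert$ divides $n$, not when $p$ does (but $\left\vert H\right\vert$ divides $p$ by Lagrange, so degrees $kp$ suffice); and you should state explicitly that distinct right cosets $Hg$ yield distinct geodesics $g^{-1}\gamma_{0}$ --- immediate since $H$ is by definition the stabilizer --- as this is what guarantees both the mutual singularity needed for (\ref{ort}) and that the normalizing constants $Z_{k}$ stay bounded away from zero.
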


Note that, in particular, this shows that $kp\left(  kp+d-1\right)  $ are
eigenvalues of $-\Delta_{M}$; more detailed results on the structure of the
spectrum of manifolds of constant, positive sectional curvature may be found
in \cite{IkedaSp, IkedaSp2, Prufer1, Prufer2, Prufer3}.

As a consequence of Theorem \ref{Thm Symm}, the proof of Proposition
\ref{Proposition inter} may be reduced to that of the following simpler result.

\begin{proposition}
\label{Prop refinitiva}Let $G$ and $p$ be as above. Given any geodesic
$\gamma$ in $S^{\ast}\mathbb{S}^{d}$ there exists a sequence $\left(
\psi_{kp}\right)  $ of normalized $G_{\gamma}$-invariant spherical harmonics
such that $W_{\psi_{kp}}^{\mathbb{S}^{d}}\rightharpoonup\delta_{\gamma}$ as
$k\rightarrow\infty$, where $G_{\gamma}$ is the subgroup of $G$ consisting of
the $\phi\in G$ such that $\phi\left(  \gamma\right)  =\gamma$.
\end{proposition}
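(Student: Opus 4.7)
The plan is to exhibit the $\psi_{kp}$ explicitly as highest-weight spherical harmonics and to pin down their Wigner limit by commutation with an infinitesimal generator of rotations. After an orthogonal change of coordinates in $\mathbb{R}^{d+1}$, I may assume that $\gamma$ is the oriented equator lying in the $2$-plane $V:=\{x_{3}=\cdots=x_{d+1}=0\}$, parametrized by $\theta\mapsto(\cos\theta,\sin\theta,0,\ldots,0)$. I set
\[
\psi_{k}:=c_{k}\,(x_{1}+ix_{2})^{k}\big|_{\mathbb{S}^{d}},
\]
with $c_{k}>0$ chosen so that $\Vert\psi_{k}\Vert_{L^{2}(\mathbb{S}^{d})}=1$; since $(x_{1}+ix_{2})^{k}$ is harmonic and homogeneous of degree $k$ on $\mathbb{R}^{d+1}$, $\psi_{k}$ is a spherical harmonic of degree $k$. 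To verify that $\psi_{kp}$ is $G_{\gamma}$-invariant, I note that any $\phi\in G_{\gamma}$ preserves $V$ and $V^{\perp}$, and $\phi|_{V}$ is an isometry of $\gamma\cong S^{1}$. Since $G$ acts on $\mathbb{S}^{d}$ without fixed points, every $\phi\neq\operatorname*{Id}$ acts on $\gamma$ without fixed points, ruling out reflections; hence $\phi\mapsto\phi|_{V}$ embeds $G_{\gamma}$ into $SO(V)\cong SO(2)$ as a cyclic subgroup of some order $q$ dividing $p$. As $\psi_{k}$ depends only on $(x_{1},x_{2})$ and a rotation of $V$ by angle $\theta$ sends $\psi_{k}$ to $e^{ik\theta}\psi_{k}$, the harmonic $\psi_{kp}$ is automatically $G_{\gamma}$-invariant for every $k\ge 1$.

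For the Wigner limit, I would introduce the rotation generator $L:=-i(x_{1}\partial_{x_{2}}-x_{2}\partial_{x_{1}})$, a self-adjoint first-order differential operator on $\mathbb{S}^{d}$ whose semiclassical principal symbol with $h=\lambda_{kp}^{-1/2}$ is $\sigma_{L}(x,\xi):=x_{1}\xi_{2}-x_{2}\xi_{1}$. A direct computation gives $L\psi_{k}=k\psi_{k}$, and since $h\cdot kp=\sqrt{kp/(kp+d-1)}=1+O(h)$, one has $hL\psi_{kp}=\psi_{kp}+O_{L^{2}}(h)$. Let $\mu$ be any weak-$\ast$ accumulation point of $W_{\psi_{kp}}^{\mathbb{S}^{d}}$: by the general theory recalled in the introduction, $\mu$ is a $\phi_{t}^{\mathbb{S}^{d}}$-invariant probability measure on $S^{\ast}\mathbb{S}^{d}$. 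Combining the standard symbol composition $hL\circ\operatorname*{op}\nolimits_{h}(a)=\operatorname*{op}\nolimits_{h}(\sigma_{L}\,a)+O_{L^{2}\to L^{2}}(h)$ with the self-adjointness of $L$ yields
\[
\int_{T^{\ast}\mathbb{S}^{d}}a\,d\mu=\int_{T^{\ast}\mathbb{S}^{d}}\sigma_{L}\,a\,d\mu\qquad\text{for every }a\in C_{c}^{\infty}(T^{\ast}\mathbb{S}^{d}),
\]
so $\operatorname{supp}\mu\subset\{\sigma_{L}=1\}\cap S^{\ast}\mathbb{S}^{d}$.

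It remains to identify this constraint set. Writing $\partial_{\theta}:=x_{1}\partial_{x_{2}}-x_{2}\partial_{x_{1}}$ for the rotation vector field on $\mathbb{S}^{d}$, one has $|\partial_{\theta}|_{g}=\sqrt{x_{1}^{2}+x_{2}^{2}}\le 1$ (with equality exactly on $\gamma$) and $\sigma_{L}(x,\xi)=\langle\partial_{\theta}|_{x},\xi\rangle$. Cauchy--Schwarz then gives $\sigma_{L}\le 1$ on $S^{\ast}\mathbb{S}^{d}$, with equality iff $x\in\gamma$ and $\xi$ is the unit covector dual to $\partial_{\theta}|_{x}$, namely the positively oriented unit cotangent to $\gamma$. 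Hence $\{\sigma_{L}=1\}\cap S^{\ast}\mathbb{S}^{d}$ is exactly the oriented lifted orbit $\gamma$, and $\mu$, being a probability measure supported on this single closed geodesic orbit, must equal $\delta_{\gamma}$. The main technical step I expect to need care with is the semiclassical commutation argument: one must treat $hL$ as a genuine semiclassical operator of principal symbol $\sigma_{L}$ and carefully absorb the $O(h)$ discrepancy between $h\cdot kp$ and $1$, but this is routine and vanishes in the Wigner limit.
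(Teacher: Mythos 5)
Your proof is correct, but it takes a genuinely different route from the paper's on both of the points that need proving. For the $G_{\gamma}$-invariance, the paper first conjugates $G_{\gamma}$ into a canonical diagonal subgroup $G(p,l_{1},\dots,l_{n})$ of $SO(d+1)$ acting on $\mathbb{C}^{n}\cong\mathbb{R}^{d+1}$ (this is where Lemma \ref{Lemma cyclic} and the hypothesis that $d$ is odd enter), works with the model geodesics $\gamma_{j}$ and the harmonics $(x_{2j-1}+ix_{2j})^{k}$, and then transports a general $\phi$-invariant geodesic to a model one by an element of $SU(n)$ commuting with the generator. You instead normalize the geodesic rather than the group, and observe that only the restriction of $G_{\gamma}$ to the $2$-plane $V$ of $\gamma$ matters: since the action is fixed-point free, this restriction is a faithful embedding into $SO(2)$ with cyclic image of order $q$ dividing $p$, so $(x_{1}+ix_{2})^{kp}$ transforms by the trivial character. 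This renders Lemma \ref{Lemma cyclic}, the complex-linear bookkeeping, and the parity of $d$ unnecessary for this proposition, which is a genuine simplification. For the concentration $W_{\psi_{kp}}^{\mathbb{S}^{d}}\rightharpoonup\delta_{\gamma}$, the paper simply quotes \cite{JakZel}, whereas you reprove it via the exact relation $L\psi_{k}=k\psi_{k}$ for the rotation generator, the resulting constraint $\sigma_{L}=1$ $\mu$-almost everywhere, and the identification of $\{\sigma_{L}=1\}\cap S^{\ast}\mathbb{S}^{d}$ with the oriented lift of $\gamma$; this is correct and self-contained, with the one small elision that at the last step you should explicitly invoke the $\phi_{t}^{\mathbb{S}^{d}}$-invariance of $\mu$ (which you did establish) to conclude that the unique invariant probability measure carried by the periodic orbit is $\delta_{\gamma}$. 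You should also say explicitly that the initial orthogonal change of coordinates replaces $G$ by a conjugate group acting without fixed points, which is harmless by (\ref{diff}).
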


Note that Proposition \ref{Prop refinitiva} is a direct consequence of Theorem
1 in \cite{JakZel} when $d$ is even, since in this case either $G=\left\{
\operatorname*{Id}\right\}  $ or $G=\left\{  \operatorname*{Id}%
,-\operatorname*{Id}\right\}  $, see \cite{WolfCC}. Therefore, we shall assume
in what follows that $d$ is odd, and therefore $\mathbb{S}^{d}$ is contained
in an even-dimensional euclidean space $\mathbb{R}^{d+1}$. Write $n:=\left(
d+1\right)  /2$, in what follows, we shall identify $\mathbb{R}^{d+1}$ to
$\mathbb{C}^{n}$. The isometries of $\mathbb{S}^{d}$ that act without fixed
points are restrictions to $\mathbb{S}^{d}$ of maps belonging to $SO\left(
d+1\right)  $. Given any $\phi\in SO\left(  d+1\right)  $, there exist
$\varphi\in SO\left(  d+1\right)  $ and $\left(  \theta_{1},...,\theta
_{n}\right)  \in\mathbb{T}^{n}$ such that%
\begin{equation}
\varphi^{-1}\phi\varphi=\left[
\begin{array}
[c]{ccc}%
e^{i\theta_{1}} & \cdots & 0\\
\vdots & \ddots & \vdots\\
0 & \cdots & e^{i\theta_{n}}%
\end{array}
\right]  . \label{canonical}%
\end{equation}
Our next result clarifies the structure of the groups of isometries that leave
a geodesic invariant.

\begin{lemma}
\label{Lemma cyclic}Let $H\subset SO\left(  d+1\right)  $ be a finite subgroup
that acts without fixed points on $\mathbb{S}^{d}$. If $H$ leaves a geodesic
$\gamma$ in $\mathbb{S}^{d}$ invariant then $H$ must be cyclic.
\end{lemma}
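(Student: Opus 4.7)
The plan is to restrict the action of $H$ to the 2-plane $P \subset \mathbb{R}^{d+1}$ that contains the geodesic $\gamma$, and to show that the resulting representation $H \to O(P)$ is an injective homomorphism whose image lies in $SO(P)$. Since every finite subgroup of $SO(2)$ is cyclic, this will give the conclusion.

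First I would set up the representation: since each $\phi \in H \subset SO(d+1)$ is linear and sends $\gamma$ to itself, it preserves the 2-plane $P = \operatorname{span}(\gamma) \subset \mathbb{R}^{d+1}$. Hence $\phi \mapsto \phi|_P$ is a group homomorphism $\rho : H \to O(P) \cong O(2)$.

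Next I would check injectivity. If $\rho(\phi) = \operatorname{Id}$, then $\phi$ fixes every point of $\gamma \subset \mathbb{S}^d$ pointwise, so the fixed-point-free hypothesis on the $H$-action forces $\phi = \operatorname{Id}$.

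Then I would rule out reflections in the image. Suppose some $\phi \in H$ gives $\rho(\phi) \in O(P) \setminus SO(P)$, i.e., a reflection of the 2-plane $P$. A reflection of $\mathbb{R}^2$ has a fixed line, and this line meets the unit circle $\gamma = P \cap \mathbb{S}^d$ in two points. These are fixed points of $\phi$ on $\mathbb{S}^d$, again contradicting the hypothesis. Therefore $\rho(H) \subset SO(P)$, and $H$ embeds as a finite subgroup of $SO(2)$, which is necessarily cyclic.

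This argument is quite short and there is no serious obstacle; the only care needed is the observation that while $\det \phi = 1$ does not by itself force $\det \rho(\phi) = 1$ (the $(-1)$ could be absorbed in $P^\perp$), the geometric fixed-point argument on the unit circle of $P$ rules out the reflection case directly. The canonical form \eqref{canonical} plays no role here, though it could be used as an alternative route by conjugating $\phi$ so that $P$ becomes one of the coordinate complex lines.
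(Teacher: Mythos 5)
Your proof is correct, and it is a cleaner route to the same conclusion than the one the paper takes. The key observation is shared: a nontrivial element of $H$ cannot act trivially on the plane $P=\pi_{\gamma}$ of the geodesic, because it would then fix the points of $\gamma\subset\mathbb{S}^{d}$, contradicting freeness. But from there the two arguments diverge. The paper conjugates every $\phi\in H$ into a block form $\operatorname{diag}\left(e^{i\theta},Q\right)$ with $Q\in SO\left(d-1\right)$, embeds $H$ into $\mathbb{Z}_{p}\times SO\left(d-1\right)$, and then shows that any subgroup of such a product meeting $\left\{0\right\}\times SO\left(d-1\right)$ trivially is the graph of an injective homomorphism $\chi$ from a cyclic subgroup of $\mathbb{Z}_{p}$, hence cyclic. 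You instead observe that restriction to $P$ gives an injective homomorphism $H\rightarrow O\left(2\right)$ whose image avoids reflections (a reflection of $P$ fixes a line, hence two antipodal points of $\gamma$), so $H$ is isomorphic to a finite subgroup of $SO\left(2\right)$ and therefore cyclic. Your version buys two things: it dispenses with the canonical form and the graph-of-homomorphism argument entirely, and it makes explicit a point the paper glosses over, namely why the restriction of $\phi$ to $\pi_{\gamma}$ is a rotation $e^{i\theta}$ rather than a reflection (the paper simply writes the block form with $e^{i\theta}$ without justifying the exclusion of the orientation-reversing case). The paper's more algebraic packaging has the minor advantage of exhibiting the order of the restricted rotation as a divisor of $\left\vert H\right\vert$, which is reused implicitly in the proof of Proposition \ref{Prop refinitiva}, but that information is equally available from your argument.
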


\begin{proof}
Suppose $\gamma$ is obtained as the intersection of $\mathbb{S}^{d}$ with a
plane $\pi_{\gamma}\subset\mathbb{R}^{d+1}$ through the origin. Then every
$\phi\in H$ leaves invariant both $\pi_{\gamma}$ and $\left(  \pi_{\gamma
}\right)  ^{\perp}$. Therefore, there exists a $\varphi\in SO\left(
d+1\right)  $ such that every $\phi\in G$ is of the form:%
\[
\phi=\varphi^{-1}\left[
\begin{array}
[c]{cc}%
e^{i\theta} & 0\\
0 & Q
\end{array}
\right]  \varphi,
\]
for some $\theta\in\mathbb{S}^{1}$, $Q\in SO\left(  d-1\right)  $. Since
$\phi$ has no fixed points, the order of $e^{i\theta}$ must coincide with the
order of $\phi$ and must divide $p:=\left\vert H\right\vert $. By the same
reason, $H$ cannot have elements of the form%
\[
\varphi^{-1}\left[
\begin{array}
[c]{cc}%
\operatorname*{Id} & 0\\
0 & Q
\end{array}
\right]  \varphi,\qquad Q\in SO\left(  d-1\right)  ;
\]
unless $Q=\operatorname*{Id}$. This shows that $H$ is conjugate in $SO\left(
d+1\right)  $ to a subgroup of the group consisting of the elements%
\[
\left[
\begin{array}
[c]{cc}%
e^{2\pi ij/p} & 0\\
0 & Q
\end{array}
\right]  ,\qquad j=1,...,p,\quad Q\in SO\left(  d-1\right)  ,
\]
which is isomorphic to $\mathbb{Z}_{p}\times SO\left(  d-1\right)  $. But any
subgroup $A$ of $\mathbb{Z}_{p}\times SO\left(  d-1\right)  $ having the
property that the identity is the only element of the form $\left(
0,Q\right)  $ must necessarily be cyclic. 

Indeed, let $C\subset\mathbb{Z}_{p}$ be the (cyclic) subgroup consisting of
the $q\in\mathbb{Z}_{p}$ such that $\left(  q,h\right)  \in A$ for some $h\in
SO\left(  d-1\right)  $. Given any $q\in C$, there exists a unique $h\in
SO\left(  d-1\right)  $ such that $\left(  q,h\right)  \in G$ (otherwise,
there would exist elements in $A$ of the form $\left(  0,h\right)  $ with
$h\neq\operatorname*{Id}$); denote it by $\chi\left(  q\right)  $. Clearly,
the mapping $\chi:C\rightarrow SO\left(  d-1\right)  $ is an injective group
homomorphism and $A$ is the graph of $\chi$. If $q_{0}$ is a generator of $C$
then necessarily $\left(  q_{0},\chi\left(  q_{0}\right)  \right)  $ is a
generator of $A$.
\end{proof}

\medskip

\begin{proof}
[Proof of Proposition \ref{Prop refinitiva}]Let $p\in\mathbb{N}$ and take
$l_{1},...,l_{n}$ positive integers less than or equal to $p$ and coprime with
$p$. Denote by $G\left(  p,l_{1},...,l_{n}\right)  $ the subgroup of
$SO\left(  d+1\right)  $ generated by%
\begin{equation}
\phi:=\left[
\begin{array}
[c]{ccc}%
e^{2\pi il_{1}/p} & \cdots & 0\\
\vdots & \ddots & \vdots\\
0 & \cdots & e^{2\pi il_{n}/p}%
\end{array}
\right]  ; \label{gen}%
\end{equation}
which acts without fixed points. Let $\gamma$ be a geodesic in $S^{\ast
}\mathbb{S}^{d}$ and write $G_{\gamma}:=G_{\delta_{\gamma}}$. Clearly,
$G_{\gamma}$ is the subgroup of $G$ consisting of the isometries that leave
$\gamma$ invariant.\medskip\ 

\noindent\emph{It suffices to prove the conclusion for }$G_{\gamma}=G\left(
p,l_{1},...,l_{n}\right)  $\emph{.} This is due to the fact that any subgroup
$G_{\gamma}$ is generated by some element $\rho$ of order $p:=\left\vert
G_{\gamma}\right\vert $, as a consequence of Lemma \ref{Lemma cyclic}. Now,
since $\rho$ is conjugate in $SO\left(  d+1\right)  $ to an element of the
form (\ref{canonical}) and $\rho^{k}$ has no fixed points for $1\leq k<p$ we
conclude that $G_{\gamma}=\varphi^{-1}G\left(  p,l_{1},...,l_{n}\right)
\varphi$ for some $\varphi\in SO\left(  d+1\right)  $ and some positive
integers $l_{1},...,l_{r}\leq p$ coprime with $p$. Let $\tilde{\gamma
}:=\varphi^{-1}\left(  \gamma\right)  $; the geodesic $\tilde{\gamma}$ clearly
satisfies $G_{\tilde{\gamma}}=G\left(  p,l_{1},...,l_{n}\right)  $; if
$\delta_{\tilde{\gamma}}$ is an invariant semiclassical measure realized by a
sequence $(\tilde{\psi}_{kp})$ of $G_{\tilde{\gamma}}$-invariant spherical
harmonics then $\psi_{kp}:=\tilde{\psi}_{kp}\circ\varphi$ is a sequence of
$G_{\gamma}$-invariant spherical harmonics satisfying (because of
(\ref{diff})) $W_{\psi_{kp}}^{\mathbb{S}^{d}}\rightharpoonup\delta
_{\varphi\left(  \tilde{\gamma}\right)  }=\delta_{\gamma}$ as $k\rightarrow
\infty$.\medskip\ 

\noindent\emph{The conclusion holds for }$G_{\gamma}=G\left(  p,l_{1}%
,...,l_{n}\right)  $. Start considering the geodesics $\gamma_{j}$ defined by
$\left\vert x_{2j-1}\right\vert ^{2}+\left\vert x_{2j}\right\vert ^{2}=1$.
Let
\[
\psi_{k}^{0}\left(  x\right)  :=C_{k}\left(  x_{2j-1}+ix_{2j}\right)  ^{k},
\]
where $C_{k}>0$ is chosen to have $\left\Vert \psi_{k}^{0}\right\Vert
_{L^{2}\left(  \mathbb{S}^{d}\right)  }=1$. Clearly, $\psi_{k}^{0}$ is a
spherical harmonic of degree $k$ and, as is well known (see for instance
\cite{JakZel}), $W_{\psi_{k}^{0}}^{\mathbb{S}^{d}}\rightharpoonup
\delta_{\gamma_{j}}$ as $k\rightarrow\infty$. Moreover, it easy to check that%
\[
\psi_{k}^{0}\left(  \phi\left(  x\right)  \right)  =C_{k}e^{2\pi ikl_{j}%
/p}\left(  x_{2j-1}+ix_{2j}\right)  ^{k};
\]
and in particular $\psi_{kp}^{0}\circ\phi=\psi_{kp}^{0}$. Therefore
$\delta_{\gamma_{j}}$ is a $G_{\gamma}$-invariant semiclassical measure
realized by the sequence $(\psi_{kp}^{0})$.

Let now $\gamma$ be a $\phi$-invariant geodesic in $\mathbb{S}^{d}$. Suppose
that there exists a $\chi\in SO\left(  d+1\right)  $, commuting with $\phi,$
and such that $\chi\left(  \gamma_{j}\right)  =\gamma$; clearly $\psi
_{kp}:=\psi_{kp}^{0}\circ\chi$ is again $\phi$-invariant and $\delta_{\gamma}$
is an invariant semiclassical measure realized by $\left(  \psi_{kp}\right)
$. The proof will be concluded as soon as we show that such a $\chi$ exists.
If the geodesic $\gamma$ differs from the $\gamma_{j}$ then it must be the
intersection of $\mathbb{S}^{d}$ with a plane $\pi_{\gamma}\subset
\mathbb{R}^{d+1}$ which is also a complex line in $\mathbb{C}^{n}$ and that is
invariant by $\phi$. Therefore, it must be contained in a linear subspace
$E_{\gamma}$ of $\mathbb{C}^{n}$ on which $\phi$ acts as multiplication by
some fixed $e^{2\pi il_{j}/p}$. Define $\chi$ as an element of $SU\left(
n\right)  \subset SO\left(  d+1\right)  $ such that $\chi\left(  \gamma
_{j}\right)  =\gamma$ and $\chi$ is the identity on the orthogonal of
$E_{\gamma}$. Clearly, $\chi_{|E_{\gamma}}$ commutes with multiplication by
$e^{2\pi il_{j}/p}$ and the result follows.
\end{proof}

\bigskip

Add some references: Berry at les houches, buscar referencias f\'{\i}sicas
(voros, heller...)

\end{document}